
\documentclass[letterpaper, 10 pt, conference]{ieeeconf}  %

\IEEEoverridecommandlockouts                              %
\overrideIEEEmargins

\usepackage[utf8]{inputenc}
\usepackage[T1]{fontenc}

\usepackage{graphics} %
\usepackage{amsmath} %
\usepackage{amsfonts}
\usepackage{amssymb}  %
\usepackage{mathrsfs}
\usepackage[binary-units]{siunitx}
\usepackage{pgfplots}
\usepackage[commentmarkup=uwave]{changes}

\makeatletter
\let\NAT@parse\undefined
\makeatother

\let\ieeebibliography\thebibliography

\usepackage[numbers,sort&compress]{natbib}
\usepackage[colorlinks]{hyperref}

\renewcommand\thebibliography[1]{\ieeebibliography{#1}}

\usepackage[IEEE]{altthm-styles}

\definechangesauthor[name={Benoît}, color=blue]{BL}
\definechangesauthor[name={Torbjørn}, color=red]{TC}

\renewcommand{\added}[2][]{#2}

\newcommand{\Def}[1]{\overset{\text{\tiny def}}{#1}}

\title{\LARGE \bf
Sequential sum-of-squares programming for analysis of nonlinear systems$^\star$
}

\author{Torbjørn Cunis$^{1}$ and Benoît Legat$^{2}$%
\thanks{$^\star$Supported by the authors' institutions.}%
\thanks{$^{1}$T. Cunis is with the Institute of Flight Mechanics and Control,
        University of Stuttgart, 70569 Stuttgart, Germany.
        {\tt\small tcunis@ifr.uni-stuttgart.de}}%
\thanks{$^{2}$B. Legat is with the ESAT Deparment, KULeuven, Leuven, Belgium.
        {\tt\small benoit.legat@esat.kuleuven.be}}%
}

\begin{document}

\maketitle
\thispagestyle{empty}
\pagestyle{empty}

\begin{abstract}
Numerous interesting properties in nonlinear systems analysis can be written as polynomial optimization problems with nonconvex sum-of-squares problems. To solve those problems efficiently, we propose a sequential approach of local linearizations leading to tractable, convex sum-of-squares problems. Local convergence is proven under the assumption of strong regularity and the new approach is applied to estimate the region of attraction of a polynomial aircraft model.
\end{abstract}

\section{Introduction}
Polynomials that can be written as a sum of squares are a strict subset of the nonnegative polynomials. While determining whether a given polynomial does not assume negative values is computationally hard, Parillo~\cite{parillo2003} showed in his seminal paper that convex optimization over sum-of-squares polynomials can be reduced to semidefinite programming. His works, as well as the development of the dual approach via moments by Lasserre~\cite{Lasserre2001}, and the advent of efficient algorithms for semidefinite problems laid the foundation for numerical analysis of nonlinear systems with polynomial dynamics that is today known as sum-of-squares programming.

Applications of convex sum-of-squares programming include stability verification for hybrid systems \cite{Prajna2003, ahmadiParrilo2008, papachristodoulouPrajna2009, Ichihara2012, Holicki2019}, optimization algorithms and optimization-based control \cite{summersKunzEtAl2013, Tan2021, Korda2017}, control synthesis \cite{jarvisEtAl2003, ebenbauerAllgoewer2006, Oishi2013, Vatani2015}, and many more. 
As these approaches often make use of Lyapunov-type functions and dissipativity inequalities, many sum-of-squares constraints for polynomial dynamics can be viewed as the natural extension of linear matrix inequalities for linear systems~\cite{Boyd1994}. However, unlike in the linear case, most properties of nonlinear systems such as asymptotic stability, invariance, or controllability often are valid on a region of the system's state-space only. The problem of determining the region of attraction \cite{Genesio1989}, for example, thus consists of finding a Lyapunov candidate $V$ and a region $X$ (often a sublevel set) as well as certifying that $V$ decays strictly on $X$. 
If $V$ and the describing function of $X$ are polynomial decision variables, estimating the region of attraction is a nonconvex, nonlinear sum-of-squares problem by the Positivstellensatz of the reals \cite{stengle1974}.

Despite nonlinear sum-of-squares problems being computationally hard, local analysis of stability and other properties of polynomial dynamics with sum-of-squares programming has been extensively studied \cite{Cotorruelo2020a, khodadadiEtAl2014, Cunis2020c, zhengEtAl2018, topcuEtAl2010, topcuEtAl2008, Newton2022, Cotorruelo2021, Iannelli2019, Yin2021b, Cunis2020a}. Here, the (mostly bilinear) nonconvex constraints have been mitigated by bisections \cite{seilerBalas2010}, coordinate descent \cite{majumdarEtAl2013}, and combinations of both. Yet, except for quasiconvex problems, convergence is not guaranteed (see remarks in \cite{chakrabortyEtAl2011}). Given that the underlying semidefinite problems scale notoriously with the polynomial degree, it is desirable to limit the number of convex evaluations.

In this paper, we take inspiration from sequential convex programming \cite{Freund2007, Mao2018, Li2019, Dinh2010, Doelman2017} and study a sequential approach for nonlinear conic problems which we combine with a line search using a merit function from Powell~\cite{Powell1978}. The nonlinear problem is linearized around a solution candidate in order to obtain an affine conic problem. For the sum-of-squares cone, sum-of-squares toolboxes such as {\sc spot}, {\sc sosopt}, or {\sc sostools} are readily available to solve the local problems via reduction to a semidefinite program; and more recently, direct implementations of the sum-of-squares cone have been proposed \cite{Papp2019,legat2022low}.
{Similar to the affine case, nonlinear sum-of-squares problems could directly be reduced to a nonlinear semidefinite program; yet the authors are only aware of the toolbox {\sc SumOfSquares.jl}~\cite{weisser2019polynomial} for that purpose, which is limited to quadratic expressions.}
Moreover, the semidefinite representation of a sum-of-squares polynomial is nonunique (see comments in \cite[Section~3.2]{parillo2003}); yet uniqueness of the solution is usually assumed for convergence. %

We prove local convergence of the sequence of convex problems using a result from variational analysis \cite{Dontchev2021} that builds upon the implicit function theorem for strongly regular generalized equations by Robinson \cite{Robinson1980}.
As this result is stated for (possibly infinite dimensional) Banach spaces,
our analysis works in the general setting of nonlinear conic programs with convex cones embedded in Banach spaces.
Since the vector space of polynomials is not complete, we limit ourselves to optimization problems with fixed polynomial degree but our sequential algorithm can be applied to other cones as well. We further investigate the line search based on the dual theory of affine sum-of-squares optimization. Numerical results for practical engineering problems demonstrate that sequential sum-of-squares programming significantly reduces the number of convex problems to be solved and thus the computation time compared to previous, iterative approaches.

Our proof generalizes \cite{Dinh2010} in two aspects.
First, we consider convex cones in arbitrary Banach spaces rather than embedded in $\mathbb R^n$.
Second, we show that the convergence still holds if a line search is used to improve convergence speed. %
Moreover, by use of variational analysis, our paper provides a simpler proof %
while obtainig a tighter convergence rate. %

The remainder of the paper is organized as follows: Section~\ref{sec:preliminaries} introduces the tools from variational analysis and Section~\ref{sec:sumofsquares} motivates and states the problem of nonlinear sum-of-squares optimization. The sequential programming approach is detailed in Section~\ref{sec:sequentialsos} and local convergence is proven in Section~\ref{sec:analysis}. In Section~\ref{sec:examples}, the sequential approach is applied to problems from nonlinear system analysis. %

\paragraph*{Notation}
$\mathbb N$ (resp., $\mathbb N_0$) and $\mathbb R$ denote the natural numbers excluding (resp., including) zero and the reals, respectively. For some $m \in \mathbb N$, the set of symmetric (resp., positive semidefinite) matrices in $\mathbb R^{m \times m}$ is $\mathbb S_m$ (resp., $\mathbb S_m^+$).

\section{Preliminaries}
\label{sec:preliminaries}
Let $X$, $Y$, and $P$ be Banach spaces.
The dual space $X^*$ is set of linear operators $l: X \to \mathbb R$ is $X^*$ %
with evaluation $\langle \cdot, \cdot \rangle: X^* \times X \to \mathbb R$.
{Moreover, the adjunct of a linear mapping $A: X \to Y$ is the linear mapping $A^*: Y^* \to X^*$ satisfying $\langle l, A(\xi) \rangle = \langle A^*(l), \xi \rangle$ for all $\xi \in X$ and $l \in Y^*$.}

\subsection{Normals \& Gradients}
A convex cone is a set $C \subset X$ satisfying $r_1 \xi_1 + r_2 \xi_2 \in C$ for all $\xi_1, \xi_2 \in C$ and $r_1, r_2 \in \mathbb R_{\geq 0}$.
The dual cone of $C$ is defined as %
\begin{align*}
    C^* = \{ v \in X^* \, | \, \text{$\langle v, \xi \rangle \geq 0$ for all $\xi \in X$} \}
\end{align*}
and the dual of $C^*$ is isometric to $C$.
Moreover, for a convex set $\Omega \subset X$, the normal cone mapping $N_\Omega: X \rightrightarrows X^*$ is given by %
\begin{align*}
    N_\Omega(\xi_0) = \{ w \in X^* \, | \, \text{$\langle w, \xi - \xi_0 \rangle \leq 0$ for all $\xi \in \Omega$} \}
\end{align*}
if $\xi_0 \in \Omega$, and $N_\Omega(\xi_0) = \varnothing$ otherwise.

\begin{definition}
    The {\em (Fréchet) derivative} of a nonlinear function $g: X \to Y$ at $\xi_0 \in X$ is a linear mapping $\nabla g(\xi_0): X \to Y$ satisfying %
    \begin{align*}
        \lim_{\xi \to \xi_0} \frac{g(\xi_0) + \nabla g(\xi_0)(\xi - \xi_0) - g(\xi) }{\| \xi_0 - \xi \|} = 0 
    \end{align*}
    and $g$ is {\em (Fréchet) differentiable} if and only if $\nabla g(\xi_0)$ exists for all $\xi_0 \in X$.
\end{definition}

\subsection{Continuity \& Regularity}
Let $h: X \times P \to Y$ be a function and $H: X \rightrightarrows Y$ be a set-valued mapping.
$h$ is said to be {\em Lipschitz continuous} with respect to $\xi$ around $(\xi_0, \pi_0) \in \operatorname{int}\operatorname{dom} h$ if and only if 
\begin{align*}
    \limsup_{\substack{\xi, \xi' \to \xi_0, \xi \neq \xi' \\ \pi \to \pi_0}} \frac{\| h(\xi', \pi) - \psi(\xi, \pi) \|}{\| \xi' - \xi \|} = \kappa
\end{align*}
with constant $\kappa < \infty$. 
Moreover, $H$ is said to have a {\em single-valued localization} $h: X' \to Y'$ around $\xi_0 \in X$ for $\upsilon_0 \in Y$ if and only if $X' \subset X$ and $X' \subset X$ are neighbourhoods of $\xi_0$ and $\upsilon_0$, respectively, $h(\xi_0) = \upsilon_0$, and $H(\xi) \cap Y' = \{h(\xi)\}$ for all $\xi \in X'$. 
If the inverse $H^{-1}: \upsilon \mapsto \{ \xi \in X \, | \, \upsilon \in H(\xi) \}$ has a single-valued localization %
around $\upsilon_0$ for $\xi_0$ that is Lipschitz continuous around $\upsilon_0$ with constant $\gamma$, then $H$ is called {\em strongly regular} at $\xi_0$ for $\upsilon_0$ with constant $\gamma$.

\begin{theorem}[Theorem~8.8 of \cite{Dontchev2021}]
    \label{thm:implicit-vi}
    Take $\psi: X \times P \to Y$ and $N: X \rightrightarrows Y$;
    suppose $\psi$ is Lipschitz continuous with respect to $\pi$ around $(\xi_0, \pi_0)$ with constant $\kappa$ and $0 \in \psi(\xi_0, \pi_0) + N(\xi_0)$; if there exists $\psi_0: X \to Y$ such that $\psi_0(\xi) = \psi(\xi, \pi)$ around $\xi_0$ if $\pi \to \pi_0$ and $\psi_0 + N$ is strongly regular at $\xi_0$ for $0$ with constant $\gamma$, then the mapping
    \begin{align*}
        H: \pi \mapsto \{ \xi \in X \, | \, \psi(\xi, \pi) + N(\xi) \ni 0 \}
    \end{align*}
    has a Lipschitz continuous, single-valued localization around $\pi_0$ for $\xi_0$ with constant $\gamma \kappa$.
    \noqed
\end{theorem}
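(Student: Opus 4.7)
The plan is to recast the parametric generalized equation as a fixed-point problem amenable to a Lipschitz contraction argument. Introducing the residual $r(\xi,\pi) = \psi_0(\xi) - \psi(\xi,\pi)$, one has the equivalence
\[
    0 \in \psi(\xi,\pi) + N(\xi) \iff r(\xi,\pi) \in \psi_0(\xi) + N(\xi).
\]
By the strong regularity hypothesis, $(\psi_0 + N)^{-1}$ admits a Lipschitz continuous single-valued localization $\sigma$ around $0$ for $\xi_0$ with constant $\gamma$, so that, on a neighbourhood of $\xi_0$, the problem reduces to the fixed-point equation $\xi = \sigma(r(\xi,\pi))$.

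The second step is to verify that, for $\pi$ in a sufficiently small neighbourhood of $\pi_0$, the map $T_\pi := \sigma \circ r(\cdot,\pi)$ is a self-contraction on a closed ball of radius $\delta$ about $\xi_0$. The Lipschitz-in-$\pi$ bound $\|r(\xi,\pi)-r(\xi,\pi')\|\leq \kappa\|\pi-\pi'\|$ is immediate from the hypothesis on $\psi$. The condition that $\psi_0(\xi) = \psi(\xi,\pi)$ around $\xi_0$ as $\pi\to\pi_0$ is then used to make the Lipschitz modulus of $r(\cdot,\pi)$ in $\xi$ arbitrarily small: for any $\varepsilon\in(0,1/\gamma)$ there exists a neighbourhood $P'$ of $\pi_0$ such that $\operatorname{lip}_\xi r(\cdot,\pi) \leq \varepsilon$ whenever $\pi\in P'$; composing with $\sigma$ yields a contraction of modulus $\gamma\varepsilon<1$. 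Self-mapping of the ball follows from $\|T_\pi(\xi_0) - \xi_0\| = \|\sigma(r(\xi_0,\pi)) - \sigma(0)\| \leq \gamma\kappa\|\pi-\pi_0\|$, which becomes smaller than $(1-\gamma\varepsilon)\delta$ by further shrinking $P'$.

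Banach's fixed-point theorem then delivers a unique fixed point $h(\pi)$ in the ball, which is precisely the single-valued localization of $H$ around $\pi_0$ for $\xi_0$. The Lipschitz estimate is obtained from the chain
\[
    \|h(\pi)-h(\pi')\| \leq \gamma\bigl(\varepsilon\|h(\pi)-h(\pi')\| + \kappa\|\pi-\pi'\|\bigr),
\]
yielding $\|h(\pi)-h(\pi')\| \leq \gamma\kappa(1-\gamma\varepsilon)^{-1}\|\pi-\pi'\|$; letting $\varepsilon\to 0$ by shrinking $P'$ recovers the asserted constant $\gamma\kappa$.

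The principal obstacle is to extract, from the informal approximation hypothesis relating $\psi_0$ to $\psi(\cdot,\pi)$, a quantitative bound on $\operatorname{lip}_\xi r(\cdot,\pi)$ that tends to zero with $\pi\to\pi_0$; this is what makes the contraction constant fall below $1/\gamma$ and, in the limit, recovers the sharp rate. Once this bound is secured, invariance and contraction of $T_\pi$ together with the Lipschitz estimate on $h$ are essentially mechanical.
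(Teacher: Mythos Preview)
The paper does not supply its own proof of this statement: it is quoted verbatim as Theorem~8.8 of Dontchev (2021) in the Preliminaries section and closed with \verb|\noqed|, serving purely as an imported tool for the later convergence analysis. There is therefore nothing in the paper to compare your argument against.

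That said, your sketch is the standard contraction-mapping route used for implicit-function theorems for generalized equations (and is essentially how the cited reference proceeds). The reformulation via the residual $r(\xi,\pi)=\psi_0(\xi)-\psi(\xi,\pi)$ and the localization $\sigma$ of $(\psi_0+N)^{-1}$ is correct, the self-map and contraction estimates are set up properly, and the final Lipschitz bound $\gamma\kappa/(1-\gamma\varepsilon)$ with $\varepsilon\to 0$ matches the paper's definition of the Lipschitz constant as a $\limsup$ around the base point. The one point you rightly flag---that the hypothesis ``$\psi_0(\xi)=\psi(\xi,\pi)$ around $\xi_0$ if $\pi\to\pi_0$'' must be read as $\operatorname{lip}_\xi r(\cdot,\pi)\to 0$ as $\pi\to\pi_0$---is indeed the intended strict-estimator interpretation; once that is granted, the rest is routine.
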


\section{Sum-of-squares Optimization}
\label{sec:sumofsquares}
Let $x = (x_1, \ldots, x_n)$ be a tuple of free variables and $\alpha = (\alpha_1, \ldots, \alpha_n) \in \mathbb N_0^n$ a multi-index; a polynomial $\pi$ in $x$ up to degree $d$ is a linear combination
\begin{align*}
    \pi = \sum_{\| \alpha \|_1 \leq d} c_\alpha x^\alpha
\end{align*}
where $x^\alpha = x_1^{\alpha_1} \cdots x_n^{\alpha_n}$ and $\| \alpha \|_1 = \sum_i \alpha_i$. The set $\mathbb R_d[x]$ of polynomials in $x$ with real coefficients $c_\alpha \in \mathbb R$ up to degree $d$ forms a vector space with norm $\| \cdot \|$. %

\begin{definition}
    A polynomial $\pi \in \mathbb R_d[x]$ is {a} {\em sum-of-squares} {polynomial} ($\pi \in \Sigma_d[x]$) if and only if there exist {$m \in \mathbb N$ and} $\pi_1, \ldots, \pi_m \in \mathbb R_d[x]$ such that $\pi = \sum_{i=1}^m (\pi_i)^2$.
\end{definition}

It is easy to see that $\Sigma_d[x]$ forms a convex cone in $\mathbb R_d[x]$. Moreover, its dual cone %
$\Sigma_d[x]^*$
is isometric to the cone of sum-of-squares polynomials \cite{Lasserre2001}.

{To avoid confusion with the Fréchet derivative $\nabla$ (with respect to the space of polynomials), we are going to use $\partial_x: \mathbb R_d[x] \to \mathbb R_d[x]^{1 \times n}$ for the differentiation operator with respect to the free variables $x$.}

A convex sum-of-squares optimization problem
can be reduced to a semidefinite program. This is the fundamental result of \cite[Theorem~3.3]{parillo2003}, which reads as follows; denote by $s_d \in \mathbb N_0$ the number of {monomials up to degree $d$} of a polynomial in $x$. %
A polynomial $\pi \in \mathbb R_{2d}[x]$ is sum-of-squares
    if and only if there exists a matrix $Q \in \mathbb S_{s_d}^+$ satisfying $\pi = \zeta^\top Q \zeta$, where $\zeta \in \mathbb R_d[x]^{s_d}$ is the vector of monomials up to degree $d$.
The solution for $Q$ is usually not unique, leading to the {\em implicit} (or kernel) and {\em explicit} (or image) relaxations of an affine sum-of-squares problem.

\subsection{Motivation}
Consider a continuous-time dynamic system defined by the differential equation
\begin{align}
    \dot x = \phi(x)
\end{align}
where $x \in \mathbb R^n$ denotes the state vector and $\phi: \mathbb R^n \to \mathbb R^n$ is a polynomial function satisfying $\phi(0) = 0$. Many system-theoretic properties on a domain $\mathcal D \subset \mathbb R^n$ can be written as {polynomial} dissipativity inequality of the form %
\begin{align}
    \label{eq:dissipativity}
    \forall x \in \mathbb R^n, \; x \in \mathcal D \Longrightarrow \partial_x V(x) \phi(x) \leq S(x)
\end{align}
where $V \in \mathbb R[x]$ and $S \in \mathbb R[x]$ are called storage function and supply rate, respectively. The set $\mathcal D$ usually depends on $V$; for example, if $\mathcal D$ is a sublevel set of $V$ and $V$ and $S$ are positive definite {polynomials}, then Eq.~\eqref{eq:dissipativity} is LaSalle's condition for asymptotic stability \cite[Theorem~2]{laSalle1960}. %

In the sum-of-squares literature, the dissipativity condition is rewritten using the so-called generalized S-procedure \cite{seilerBalas2010}. Suppose $\mathcal D = \{ x \in \mathbb R^n \, | \, \ell_V(x) \geq 0 \}$ for some $\ell_V \in \mathbb R_d[x]$, then if there exists $\varsigma \in \Sigma_d[x]$ such that %
\begin{align}
    \label{eq:dissipativity-sos}
    (S - \partial_x V \phi) - \varsigma \ell_V \in \Sigma_{d'}[x]
\end{align}
then $V$ and $S$ satisfy \eqref{eq:dissipativity}. However, since $\ell_V$ depends on $V$, the sum-of-squares constraint \eqref{eq:dissipativity-sos} is nonlinear. Previous approaches to optimize over nonlinear sum-of-squares constraint relied on solving for one variable at a time while keeping the remaining variables fixed {(see, e.g., \cite{seilerBalas2010, majumdarEtAl2013, chakrabortyEtAl2011})}.

\subsection{Nonlinear optimization problem}
In general, a polynomial optimization problem with nonconvex sum-of-squares constraints takes the form of a nonlinear optimization
\begin{align}
    \label{eq:sos-nonlinear}
    \min_{\xi \in X} \; \langle f, \xi \rangle \quad \text{s.t. $g(\xi) \in D$ and $\xi \in C$}
\end{align}
where $X$ and $Y$ are Banach spaces, $f \in X^*$ is a linear cost, $g: X \to Y$ is a differentiable constraint mapping, and $C \subset X$ and $D \subset Y$ are convex cones. In the case of sum-of-squares, $X$ and $Y$ correspond to spaces of polynomial up to a finite degree with sum-of-squares cones $C$ and $D$ and $g$ takes polynomial values.

Define the Lagrangian as $L(\xi,l) = \langle f, \xi \rangle - \langle l, g(\xi) \rangle$, where $l \in Y^*$ is a Lagrange multiplier; %
the Karush-Kuhn-Tucker (KKT) conditions for \eqref{eq:sos-nonlinear} at $\xi_0 \in X$ are
\begin{gather*}
    f - \nabla g(\xi_0)^* l - s = 0 \\
    \langle s, \xi_0 \rangle = 0, \quad \langle l, g(\xi_0) \rangle = 0 \\
    \xi_0 \in C, \quad g(\xi_0) \in D
\end{gather*}
where $s \in C^*$ and $l \in D^*$ are dual variables associated with the cone constraints.

Define $\vartheta = (\xi, l)$ and $\mathcal T = X \times Y^*$. %
With a small abuse of notation, we identify $g(\xi)$ as an element of $D^{**}$ using the canonical isomorphism between $D$ and $D^{**}$.
Then the KKT conditions are equivalent to $(-s, -g(\xi))$ belonging to the normal cone of $C \times D^*$ at $(\xi_0, l_0) \in \mathcal T$. 
The KKT conditions %
can thus be written as generalized equation
\begin{align}
    \label{eq:variational-inequality}
    \varphi(\vartheta) + N(\vartheta) \ni 0
\end{align}
where $\varphi: \mathcal T \to \mathcal T^*$ and $N: \mathcal T \rightrightarrows \mathcal T^*$ are defined as
\begin{gather*}
    \varphi: \vartheta \mapsto (f - \nabla g(\xi)^* l, \; g(\xi))
\end{gather*}
and
\begin{gather*}
    N: \vartheta \mapsto \{ (v, \zeta) \in \mathcal T^* \, | \,  v \in N_{C}(\xi), \zeta \in N_{D^*}(l) \}
\end{gather*}
respectively. We denote the solutions to \eqref{eq:variational-inequality} by $\Theta \subset \mathcal T$. 

\begin{assumption}
    \label{ass:kkt-points}
    The set $\Theta$ is nonempty.
\end{assumption}

{Under a suitable constraint qualification, existence of a KKT point is necessary for an optimal solution of \eqref{eq:sos-nonlinear}.}

\section{Sequential Programming}
\label{sec:sequentialsos}
We propose to approach the nonlinear problem \eqref{eq:sos-nonlinear} with a sequence of local, convex problems. To that extent, let $\xi^k \in X$ with $k \in \mathbb N_0$ be the solution of the $k$-th iteration and $l^k \in Y^*$ be the associated Lagrange multiplier. Pick tolerances $\epsilon_k, \epsilon_k^* > 0$ for the primal and dual solutions as well as a small weight $\eta > 0$. Our next instance $(\xi^{k+1}, l^{k+1})$ is subject to the steps:
\begin{enumerate}
    \item Solve the convex problem at $\xi^k$,
        \begin{subequations}
            \label{eq:sos-linearized}
        \begin{align}
            &\min_{\xi \in X, \; \varsigma \in Y} \langle f, \xi \rangle \\
            &\quad \text{s.t.} \quad g(\xi^k) + \nabla g(\xi^k) (\xi - \xi^k) = \varsigma \\
            &\quad \text{and} \quad \xi \in C, \quad \varsigma \in D
        \end{align}
        \end{subequations}
        and denote the optimal solution as $\xi_+$ and the associated Lagrange multiplier as $l_+$.
    \item Solve the line search
        \begin{align}
            \label{eq:linesearch}
            \min_{r \in \mathbb R} \psi(r) - \eta r \quad \text{s.t. $0 < r \leq 1$}
        \end{align}
        where $\psi: r \mapsto L(r \xi_+ + (1-r) \xi^k, l_+)$,
        and denote the optimal solution as $\hat r$.
    \item Set $\xi^{k+1} = \hat r \xi_+ + (1 - \hat r) \xi^k$ and $l^{k+1} = \hat r l_+ + (1 - \hat r) l^k$.
\end{enumerate}
We terminate the iteration if both $\| \xi^{k+1} - \xi^k \| \leq \epsilon_k$ and $\| l^{k+1} - l^k \|_* \leq \epsilon_k^*$, where $\| \ell \|_*$ denotes the operator norm of $\ell: Y \to \mathbb R$. Otherwise, we repeat the steps for $k+1$.

As linear problem, \eqref{eq:sos-linearized} has a dual problem at $\xi^k$, viz.
\begin{subequations}
    \label{eq:sos-linearized-dual}
\begin{align}
    &\max_{l \in Y^*, \; s \in X^*} \langle l, \gamma_k \rangle \\ %
    &\quad \text{s.t.} \quad f - \nabla g(\xi^k)^* l - s = 0 \\
    &\quad \text{and} \quad l \in D^*, \quad s \in C^*
\end{align}
\end{subequations}
where $\gamma_k \Def= \nabla g(\xi^k)\xi^k - g(\xi^k)$. 
In the following analysis, we will assume that the Lagrange multiplier $l_+$ is the optimal solution of \eqref{eq:sos-linearized-dual}, provided it exists. 
If $(\xi_0, l_0) \in \mathcal T$ satisfy the KKT conditions 
\begin{gather*}
    f - \nabla g(\xi^k)^* l_0 - s = 0 \\
    \langle s, \xi_0 \rangle = 0, \quad \langle l_0, \nabla g(\xi^k) \xi_0 - \gamma_k \rangle = 0 \\
    (\xi_0, l_0) \in C \times D^*, \quad \nabla g(\xi^k) \xi_0 - \gamma_k \in D
\end{gather*}
{for some $s \in C^*$,}
then $\xi_0$ and $l_0$ are optimal solutions for \eqref{eq:sos-linearized} and \eqref{eq:sos-linearized-dual}, respectively, and satisfy $\langle f, \xi_0 \rangle = \langle l_0, \gamma_k \rangle$.

\section{Theoretical Analysis}
\label{sec:analysis}
We are going to prove local convergence of the \added[id=BL]{sequential algorithm} using a parametrized version of the generalized equation \eqref{eq:variational-inequality}; define
\begin{align*}
    \hat \varphi: (\vartheta, \xi^k) \mapsto (f - \nabla g(\xi^k)^* l, \; \nabla g(\xi^k) \xi - \gamma_k)
\end{align*}
then $\vartheta_0 = (\xi_0, l_0) \in \Theta$ if and only if it solves %
\begin{align}
    \label{eq:vi-linearized}
    \hat L(\vartheta, \xi^k) \Def= \hat \varphi(\vartheta, \xi^k) + N(\vartheta) \ni 0
\end{align}
at $\xi^k = \xi_0$. In other words, the generalized equation \eqref{eq:vi-linearized} can be understood as linearization of \eqref{eq:variational-inequality} around $\xi^k$.
The set of KKT points of \eqref{eq:sos-linearized} at $\xi^k \in X$ is given by 
\begin{align*}
    H(\xi^k) = \{ \vartheta \in \mathcal T \, | \, \hat L(\vartheta, \xi^k) \ni 0 \}
\end{align*}
the {\em solution map} of \eqref{eq:vi-linearized}.

\pagebreak

The following result is a special case of \cite[Theorem~2F.1]{Dontchev2011} for solution mappings of monotone variational inequalities and proved here for completeness.

\begin{lemma}
    \label{lem:optimality}
    Let $\xi^k \in X$; 
    if $H(\xi^k)$ is nonempty, then $H(\xi^k)$ is a convex set containing $(\xi_+, l_+)$.
\end{lemma}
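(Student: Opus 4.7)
The plan is to exploit the monotone structure of the generalized equation \eqref{eq:vi-linearized} and then invoke the convexity of the solution set of a monotone variational inequality, which is precisely the content of the cited \cite[Theorem~2F.1]{Dontchev2011}. The containment of $(\xi_+, l_+)$ is then a direct consequence of convex duality.

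First, I would verify monotonicity of $\hat L(\cdot, \xi^k)$ on $\mathcal T = X \times Y^*$. The map $\hat \varphi(\cdot, \xi^k): (\xi, l) \mapsto (f - \nabla g(\xi^k)^* l, \; \nabla g(\xi^k) \xi - \gamma_k)$ is affine, and after subtracting its constant part the remaining linear operator has the block form $\bigl(\begin{smallmatrix} 0 & -\nabla g(\xi^k)^* \\ \nabla g(\xi^k) & 0 \end{smallmatrix}\bigr)$, which is skew-adjoint with respect to the pairing between $\mathcal T$ and $\mathcal T^*$ by the definition of the adjoint recalled in Section~\ref{sec:preliminaries}. The induced quadratic form $\langle \hat \varphi(\vartheta_1, \xi^k) - \hat \varphi(\vartheta_2, \xi^k), \vartheta_1 - \vartheta_2 \rangle$ therefore vanishes identically, so $\hat \varphi(\cdot, \xi^k)$ is monotone. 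The set-valued part $N$ coincides with the normal cone of the convex set $C \times D^*$ and is thus a monotone operator. The sum of two monotone operators being monotone, $\hat L(\cdot, \xi^k)$ is a monotone generalized equation.

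Second, for the convexity of $H(\xi^k)$, I would use the Minty-type characterization available for monotone generalized equations: $\vartheta_0 \in H(\xi^k)$ if and only if $\langle w, \vartheta - \vartheta_0 \rangle \geq 0$ for every $\vartheta \in \mathcal T$ and every $w \in -\hat L(\vartheta, \xi^k)$. This expresses $H(\xi^k)$ as an intersection of closed half-spaces, one for each admissible pair $(\vartheta, w)$, and therefore as a convex set. This is exactly the statement of \cite[Theorem~2F.1]{Dontchev2011} specialised to our operator, which is why the lemma is advertised as a special case.

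Third, to show $(\xi_+, l_+) \in H(\xi^k)$, I would argue by convex duality. The primal problem \eqref{eq:sos-linearized} has linear cost, a linear equality constraint, and convex cone constraints, and \eqref{eq:sos-linearized-dual} is its Lagrangian dual. By the construction in Section~\ref{sec:sequentialsos}, $\xi_+$ is primal optimal and $l_+$ is the associated dual optimal multiplier; the nonemptiness assumption on $H(\xi^k)$ guarantees the existence of a KKT point and hence zero duality gap, so $(\xi_+, l_+)$ satisfies the KKT system displayed just before \eqref{eq:vi-linearized}, which is exactly the inclusion $\hat L((\xi_+, l_+), \xi^k) \ni 0$. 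The main obstacle I anticipate is merely the bookkeeping in the Banach-space setting: one has to check that $\nabla g(\xi^k)^*$, $N_C$, and $N_{D^*}$ compose consistently with the pairings of $X, X^*, Y, Y^*$ (in particular via the identification of $g(\xi)$ as an element of $D^{**}$ noted in the setup of \eqref{eq:variational-inequality}) so that the skew-adjoint computation for $\hat \varphi$ and the monotonicity of the normal cone maps are legitimate; once that is fixed, the remaining steps are standard.
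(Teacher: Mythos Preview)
Your approach is sound but differs from the paper's. For convexity, the paper does not invoke monotonicity or a Minty characterization at all; instead it shows directly that $H(\xi^k)$ coincides with the set $S$ of optimal primal--dual pairs for \eqref{eq:sos-linearized}--\eqref{eq:sos-linearized-dual}. One inclusion is KKT sufficiency ($H(\xi^k)\subseteq S$); the other is exactly your Step~3 applied to an arbitrary $(\xi_+,l_+)\in S$: nonemptiness of $H(\xi^k)$ forces zero duality gap, and then the chain $\langle f,\xi_+\rangle-\langle l_+,\gamma_k\rangle=\langle l_+,\nabla g(\xi^k)\xi_+-\gamma_k\rangle+\langle s,\xi_+\rangle$ yields complementary slackness, hence $S\subseteq H(\xi^k)$. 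Convexity then comes for free because $S$ is a product of optimal faces of convex programs. Your route through the skew-adjoint structure of $\hat\varphi(\cdot,\xi^k)$ and the Minty description is the abstract version behind the cited \cite[Theorem~2F.1]{Dontchev2011} and is perfectly legitimate; it has the advantage of making clear why the lemma is ``a special case'' of that theorem. Two small points to tidy up: the ``if and only if'' in your Minty step needs maximal monotonicity of $\hat\varphi(\cdot,\xi^k)+N$ (continuity of the affine part plus $N$ being a normal cone suffices, but you should say so, and in a general Banach space this is where the bookkeeping you flag actually matters); and the sign in your half-space description is flipped---for $w\in\hat L(\vartheta,\xi^k)$ monotonicity gives $\langle w,\vartheta-\vartheta_0\rangle\geq 0$, not for $w\in-\hat L(\vartheta,\xi^k)$. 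The paper's elementary duality argument sidesteps both issues.
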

\begin{proof}
    Denote the set of optimal solutions to \eqref{eq:sos-linearized} and~\eqref{eq:sos-linearized-dual} by $S \subset X \times Y^*$.
    Assume that $H(\xi^k)$ is nonempty, take $(\xi_0, l_0) \in H(\xi^k)$ and $(\xi_+, l_+) \in S$.
    By sufficiency of the KKT conditions, $(\xi_0, l_0) \in S$ and $\langle f, \xi_0 \rangle - \langle l_0, \gamma_k \rangle = 0$. Since $\langle f, \xi_+ \rangle \leq \langle f, \xi_0 \rangle$ and $\langle l_+, \gamma_k \rangle \geq \langle l_0, \gamma_k \rangle$ by primal and dual optimality, 
    \begin{align*}
        0 &\geq \langle f, \xi_+ \rangle - \langle l_+, \gamma_k \rangle \\
        &= \langle \nabla g(\xi^k)^* l_+ + s, \xi_+ \rangle - \langle l_+, \gamma_k \rangle \\
        &= \langle l_+, \nabla g(\xi^k) \xi_+ - \gamma_k \rangle + \langle s, \xi_+ \rangle
    \end{align*}
    with $s \in C^*$.
    Since $\langle l_+, \nabla g(\xi^k) \xi_+ - \gamma_k \rangle \geq 0$ and $\langle s, \xi_+ \rangle \geq 0$, the inequalities are tight and $(\xi_+, l_+) \in H(\xi^k)$. Hence, $H(\xi^k) = S$, a convex set.
\end{proof}

Combining Assumption~\ref{ass:kkt-points} with Lemma~\ref{lem:optimality}, a KKT point $(\xi_0, l_0)$ of the nonlinear problem \eqref{eq:sos-nonlinear} is a %
candidate stationary condition of the sequential algorithm.
However, we have yet to prove that $(\xi_+, l_+)$ is the unique solution of the parametrized variational inequality \eqref{eq:vi-linearized} around $\xi_0$. %
To that extent,
we make the following, standing assumptions.

\begin{assumption}
    \label{ass:regularity}
    For all $(\xi_0, l_0) \in \Theta$, the mapping $L_0 = \hat L(\cdot, \xi_0)$ is strongly regular at $(\xi_0, l_0)$ for 0 with constant $\gamma$.
\end{assumption}

By definition, strong regularity of $L_0$ requires that $L_0^{-1}(\delta)$ has a single-valued localization around $\delta = 0$ for $\vartheta_0$, which is equivalent to a perturbed convex problem having unique solutions for small perturbations $\delta \in \mathcal T^*$ \cite{Dinh2010}. 

\begin{assumption}
    \label{ass:lipschitz}
    For all $(\xi_0, l_0) \in \Theta$, the gradient $\nabla g(\xi)$ is Lipschitz continuous around $\xi_0$
    and the mapping $\nabla g(\xi)^* l$ has the Lipschitz constant $\kappa$ with respect to $\xi$ around $(\xi_0, l_0)$.
\end{assumption}

If $g$ is twice differentiable at $\xi_0$, then $\nabla g(\xi)$ is Lipschitz continuous at $\xi_0$ and the constant $\kappa$ is determined by the norm of its second derivative.
We note the following implication of Assumption~\ref{ass:lipschitz}.

\begin{lemma}
    \label{lem:regularity}
    Let $\vartheta_0 = (\xi_0, l_0) \in \Theta$; the mapping $\hat \varphi(\vartheta, \xi^k)$ is Lipschitz continuous with respect to $\xi^k$ around $(\vartheta_0, \xi_0)$ with constant $\kappa$.
\end{lemma}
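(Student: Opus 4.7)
The plan is to verify the $\limsup$-definition of Lipschitz continuity directly by decomposing $\hat\varphi(\vartheta,\xi^k) - \hat\varphi(\vartheta,\xi^{k'})$ into its two $\mathcal T^*$-components and estimating each separately in terms of $\|\xi^k - \xi^{k'}\|$, taking $\vartheta \to \vartheta_0$ and $\xi^k, \xi^{k'} \to \xi_0$.

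For the first (dual-adjoint) component, the difference collapses to
\[
    (f - \nabla g(\xi^k)^{*} l) - (f - \nabla g(\xi^{k'})^{*} l) = \nabla g(\xi^{k'})^{*} l - \nabla g(\xi^k)^{*} l,
\]
and Assumption~\ref{ass:lipschitz} asserts that $\xi \mapsto \nabla g(\xi)^{*} l$ has Lipschitz constant $\kappa$ around $(\xi_0, l_0)$. This component therefore contributes exactly $\kappa$ to the $\limsup$ as $l \to l_0$.

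For the second (primal-residual) component, I would expand $\gamma_k = \nabla g(\xi^k)\xi^k - g(\xi^k)$ and rearrange the difference into
\[
    [\nabla g(\xi^k) - \nabla g(\xi^{k'})](\xi - \xi^k) + [g(\xi^k) - g(\xi^{k'}) - \nabla g(\xi^{k'})(\xi^k - \xi^{k'})].
\]
The first bracket is bounded by $\kappa' \|\xi^k - \xi^{k'}\| \cdot \|\xi - \xi^k\|$ by Lipschitz continuity of $\nabla g$ around $\xi_0$; after dividing by $\|\xi^k - \xi^{k'}\|$ the factor $\|\xi - \xi^k\|$ forces this term to vanish in the $\limsup$. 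The second bracket is the first-order remainder of $g$ at $\xi^{k'}$, which by the fundamental theorem of calculus in Banach spaces combined with the Lipschitz bound on $\nabla g$ is of order $O(\|\xi^k - \xi^{k'}\|^2)$ and hence also disappears under division by $\|\xi^k - \xi^{k'}\|$.

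Summing the two components and passing to the $\limsup$ then yields the constant $\kappa$ from the first component alone. The only potentially subtle point is recognising that the linearisation residual of $g$ is quadratic in the step size, but this is exactly the $C^{1,1}$ consequence already guaranteed by Assumption~\ref{ass:lipschitz}; conceptually, the construction of $\hat\varphi$ is arranged precisely so that all nonlinearity in $g$ enters through this quadratic remainder, leaving the adjoint term as the sole source of the Lipschitz modulus.
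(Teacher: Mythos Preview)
Your proposal is correct and follows essentially the same route as the paper: the same two-component decomposition of $\hat\varphi$, the same algebraic rearrangement of the second component into $[\nabla g(\xi^k)-\nabla g(\xi^{k'})](\xi-\xi^k)$ plus the first-order Taylor remainder of $g$, and the same appeal to Assumption~\ref{ass:lipschitz} to make the first component yield $\kappa$ while the second vanishes in the $\limsup$. Your justification for the remainder term (a $C^{1,1}$ quadratic bound via the fundamental theorem of calculus) is slightly more explicit than the paper's, which simply invokes Fr\'echet differentiability together with Lipschitz continuity of $\nabla g$, but the content is identical.
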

\begin{proof}
    By Assumption~\ref{ass:lipschitz}, %
    the first component of $\hat \varphi(\vartheta, \xi^k)$ satisfies
    \begin{align*}
        \limsup_{\substack{\xi^k, \xi^{k'} \to \xi_0, \xi^k \neq \xi^{k'} \\ l \to l_0}} \frac{\| \nabla g(\xi^{k'})^* l - \nabla g(\xi^k)^* l \|}{\| \xi^{k'} - \xi^k \|} \leq \kappa
    \end{align*}
    and a difference in the second component can be written as
    \begin{multline*}
        g(\xi^k) + \nabla g(\xi^k) (\xi - \xi^k) - g(\xi^{k'}) - \nabla g(\xi^{k'}) (\xi - \xi^{k'}) \\ 
        = \big[ \nabla g(\xi^{k'}) - \nabla g(\xi^k) \big] (\xi - \xi^k) - e(\xi^k, \xi^{k'})
    \end{multline*}
    where $e: (\xi^k, \xi^{k'}) \mapsto g(\xi^{k'}) + \nabla g(\xi^{k'}) (\xi^k - \xi^{k'}) - g(\xi^k)$.
    By definition of the Fréchet derivative and Lipschitz continuity of $\nabla g(\xi)$ around $\xi_0$, we conclude that 
    \begin{align*}
        \limsup_{\xi^k, \xi^{k'} \to \xi_0, \xi^k \neq \xi^{k'}} \frac{e(\xi^k, \xi^{k'})}{\| \xi^{k'} - \xi^k \|} = 0
    \end{align*}
    and
    \begin{align*}
        \limsup_{\substack{\xi^k, \xi^{k'} \to \xi_0, \xi^k \neq \xi^{k'} \\ \xi \to \xi_0}} \frac{\| \nabla g(\xi^{k'}) - \nabla g(\xi^k) \|}{\| \xi^{k'} - \xi^k \|} (\xi - \xi^k) = 0.
    \end{align*}
    Combining these results we obtain
    \begin{align*}
        \limsup_{\substack{\xi^k, \xi^{k'} \to \xi_0, \xi^k \neq \xi^{k'} \\ \vartheta \to \vartheta_0}} \frac{\| \hat \varphi(\vartheta, \xi^{k'}) - \hat \varphi(\vartheta, \xi^k) \|}{\| \xi^{k'} - \xi^k \|}
        \leq \kappa + 0
    \end{align*}
    which is the desired result.
\end{proof}

We continue our theoretical analysis by proving that, by strong regularity of $\hat L(\cdot, \xi^k)$ and Lipschitz continuity of $\hat \varphi(\vartheta_0, \cdot)$, the KKT conditions of \eqref{eq:sos-linearized} have a locally unique solution at $\xi^k$ if $\vartheta^k = (\xi^k, l^k)$ is sufficiently close to $\Theta$.

\begin{proposition}
    \label{prop:localization}
    Let $\vartheta_0 = (\xi_0, l_0) \in \Theta$; the solution map $H$ of \eqref{eq:vi-linearized} %
    has a single-valued localization $\hbar: X \to \mathcal T$ around $\xi_0$ for $\vartheta_0$; and $\hbar$ is Lipschitz continuous around $\xi_0$ with constant~$\gamma \kappa$.
\end{proposition}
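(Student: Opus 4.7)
The plan is to invoke Theorem \ref{thm:implicit-vi} directly on the parametrized generalized equation \eqref{eq:vi-linearized}, viewing $\xi^k$ as the parameter and $\vartheta = (\xi, l)$ as the unknown. Concretely, I would take the theorem's parameter space to be $X$, its variable space to be $\mathcal T$, its function $\psi$ to be $\hat \varphi$, its set-valued mapping to be $N$, and its base point to be $(\vartheta_0, \xi_0)$. The solution mapping supplied by the theorem then coincides with $H$, and its localization is exactly the $\hbar$ claimed in the proposition.

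The proof then reduces to verifying the three hypotheses of Theorem \ref{thm:implicit-vi}. First, for the base-point condition $0 \in \hat \varphi(\vartheta_0, \xi_0) + N(\vartheta_0)$, I would observe that at $\xi^k = \xi_0$ the quantity $\gamma_k$ simplifies to $\nabla g(\xi_0) \xi_0 - g(\xi_0)$, so the second component of $\hat \varphi(\vartheta, \xi_0)$ reduces to $g(\xi_0)$ at $\vartheta=\vartheta_0$ and, more generally, $\hat \varphi(\vartheta, \xi_0) = \varphi(\vartheta)$; the required inclusion then inherits from $\vartheta_0 \in \Theta$ solving \eqref{eq:variational-inequality}. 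Second, Lipschitz continuity of $\hat \varphi$ with respect to the parameter $\xi^k$ around $(\vartheta_0, \xi_0)$ with constant $\kappa$ is exactly the content of Lemma \ref{lem:regularity}. Third, taking $\psi_0 = \hat \varphi(\cdot, \xi_0)$ yields $\psi_0 + N = L_0$, which is strongly regular at $\vartheta_0$ for $0$ with constant $\gamma$ by Assumption \ref{ass:regularity}; the auxiliary convergence requirement relating $\psi_0$ and $\hat \varphi(\cdot, \xi^k)$ as $\xi^k \to \xi_0$ again follows from the Lipschitz estimate of Lemma \ref{lem:regularity}.

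With the three hypotheses in hand, the theorem immediately yields a Lipschitz continuous, single-valued localization $\hbar$ of $H$ around $\xi_0$ for $\vartheta_0$ with constant $\gamma \kappa$, which is the claim. The substantive work was really relegated to Lemma \ref{lem:regularity} (the parameter Lipschitz estimate for the linearization) and to Assumption \ref{ass:regularity} (strong regularity at the nominal parameter); what remains is the bookkeeping identification $\hat \varphi(\cdot, \xi_0) = \varphi$ and the renaming of spaces needed to match the template of Theorem \ref{thm:implicit-vi}. I do not anticipate any genuine obstacle beyond keeping these identifications straight.
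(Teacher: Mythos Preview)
Your approach is essentially identical to the paper's own proof: both verify the hypotheses of Theorem~\ref{thm:implicit-vi} by invoking Lemma~\ref{lem:regularity} for the parameter-Lipschitz estimate and Assumption~\ref{ass:regularity} for strong regularity of $L_0 = \hat\varphi(\cdot,\xi_0)+N$, then read off the localization $\hbar$ with constant $\gamma\kappa$. One minor correction: your claim that ``more generally, $\hat\varphi(\vartheta,\xi_0)=\varphi(\vartheta)$'' is false, since the first component of $\hat\varphi(\vartheta,\xi_0)$ uses $\nabla g(\xi_0)^* l$ while $\varphi(\vartheta)$ uses $\nabla g(\xi)^* l$, and the second components are an affine linearization versus $g(\xi)$; the two agree only at $\vartheta=\vartheta_0$, which is all you actually need for the base-point inclusion and which is exactly what the paper uses.
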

\begin{proof}
    Define $\varphi_0: \vartheta \mapsto \hat \varphi(\vartheta, \xi_0)$; then $\varphi_0(\vartheta)$ equals $\hat \varphi(\vartheta, \xi^k)$ around $\vartheta_0$ if $\xi^k \to \xi_0$ by continuity and $\varphi_0 + N = L_0$ is strongly regular with constant $\gamma$ by Assumption~\ref{ass:regularity}. Moreover,
    $\hat \varphi(\vartheta, \xi^k)$ is Lipschitz continuous with respect to $\xi_k$ around $(\vartheta_0, \xi_0)$ with constant $\kappa$ by Lemma~\ref{lem:regularity}. By virtue of Theorem~\ref{thm:implicit-vi}, there exists a Lipschitz continuous, single-valued localization $\hbar$ of $H$ around $\xi_0$ for $\vartheta_0$ with constant $\gamma \kappa$, the desired result.
\end{proof}

In consequence, the convex problems \eqref{eq:sos-linearized} and \eqref{eq:sos-linearized-dual} at $\xi^k$ are not only feasible but have unique solutions around $\xi_0$.

\begin{lemma}
    \label{lem:uniqueness}
    Let $\vartheta_0 = (\xi_0, l_0) \in \Theta$; then $\hbar(\xi^k) = (\xi_+, l_+)$ around $\xi_0$.
\end{lemma}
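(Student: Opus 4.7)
The plan is to chain the two preceding results in this section. Proposition~\ref{prop:localization} supplies neighborhoods $X'$ of $\xi_0$ and $\mathcal T'$ of $\vartheta_0$ such that, for every $\xi^k \in X'$, the intersection $H(\xi^k) \cap \mathcal T'$ equals the singleton $\{\hbar(\xi^k)\}$. Lemma~\ref{lem:optimality} in turn identifies $H(\xi^k)$ with the set $S$ of primal-dual optimal pairs of \eqref{eq:sos-linearized}--\eqref{eq:sos-linearized-dual}, and since $(\xi_+, l_+)$ is by definition a primal-dual optimum, it lies in $H(\xi^k)$.

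The key step is then a membership check: for $\xi^k$ sufficiently close to $\xi_0$, I would verify $(\xi_+, l_+) \in \mathcal T'$, so that $(\xi_+, l_+) \in H(\xi^k) \cap \mathcal T' = \{\hbar(\xi^k)\}$ forces the claimed equality. Assumption~\ref{ass:regularity} is what makes this possible: the comment following it is precisely that, under strong regularity, the perturbed convex problem has a locally unique primal-dual solution. Thus, interpreting $(\xi_+, l_+)$ as that locally unique optimum (the one tracked by the algorithm), it must coincide with $\hbar(\xi^k)$ on $X'$.

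The main obstacle is the qualifier \emph{around $\xi_0$}: a priori the primal-dual optimal set $S = H(\xi^k)$ may contain several KKT pairs, only one of which lies in $\mathcal T'$, so the statement only makes sense once $(\xi_+, l_+)$ has been pinned down as the optimum near $\vartheta_0$. In the algorithmic setting this is natural, because the previous iterate $\vartheta^k$ is inductively close to $\vartheta_0$ and warm-starting the convex subproblem keeps its solution inside the localization $\mathcal T'$; combined with the local uniqueness afforded by strong regularity, this identifies $(\xi_+, l_+) = \hbar(\xi^k)$ unambiguously and completes the proof.
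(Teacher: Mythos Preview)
Your chain of implications is the right one up to the point where you must place $(\xi_+, l_+)$ inside $\mathcal T'$, and you correctly flag this as the obstacle. However, your resolution---appealing to warm-starting and the algorithm's inductive proximity to $\vartheta_0$---is not a proof; it presupposes a selection rule for $(\xi_+, l_+)$ that is nowhere stated in the paper, and without such a rule the argument is circular (you assume the optimum you pick is the one near $\vartheta_0$ in order to conclude it is the one near $\vartheta_0$).

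The paper sidesteps this obstacle entirely by exploiting the \emph{convexity} conclusion of Lemma~\ref{lem:optimality}, which you cite but never actually use. Since $H(\xi^k)$ is convex and the single-valued localization makes $\hbar(\xi^k)$ an isolated point of $H(\xi^k)$ (the unique element in the neighborhood $\mathcal T'$), the set $H(\xi^k)$ must be a singleton: a convex set cannot contain an isolated point unless it reduces to that point. Hence $H(\xi^k) = \{\hbar(\xi^k)\}$ outright, and \emph{any} primal--dual optimal pair $(\xi_+, l_+)$---regardless of how it was selected---equals $\hbar(\xi^k)$. This is both simpler and strictly stronger than what you attempt: it establishes global uniqueness of the KKT point of the linearized subproblem, rather than a local coincidence contingent on a warm-start hypothesis.
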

\begin{proof}
    By Proposition~\ref{prop:localization}, there exists a single-valued localization $\hbar(\xi^k)$ of $H$ around $\xi_0$; that is, $H(\xi^k)$ is nonempty and, by Lemma~\ref{lem:optimality}, a convex set that contains $(\xi_+, l_+)$ around $\xi_0$. On the other hand, $H(\xi^k)$ contains the isolated point $\hbar(\xi^k)$ and thus is a singleton. Hence,
    $\hbar(\xi^k) = (\xi_+, l_+)$ around $\xi_0$.
\end{proof}

It rests to prove that the next iterate, subject to the line search, converges towards a KKT point as well.%

\begin{proposition}
    \label{prop:linesearch}
    Let $(\xi_0, l_0) \in \Theta$; %
    if $\xi^k, \xi_+$, and $l_+$ are sufficiently close to $(\xi_0, l_0)$, then the solution of \eqref{eq:linesearch} satisfies $\hat r \geq \eta/\kappa$. %
\end{proposition}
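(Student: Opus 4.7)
The plan is to analyze the derivative of $\psi$ and combine the Lipschitz hypothesis (Assumption~\ref{ass:lipschitz}) with the KKT conditions satisfied by $(\xi_+, l_+)$ at $\xi^k$ to bound $\hat r$ from below. Write $\Delta = \xi_+ - \xi^k$. By the chain rule,
\begin{align*}
    \psi'(r) = \langle f, \Delta \rangle - \langle l_+, \nabla g(\xi^k + r\Delta) \Delta \rangle = \langle f - \nabla g(\xi^k + r\Delta)^* l_+, \Delta \rangle .
\end{align*}

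The first step is to show $\psi'(0) \leq 0$. Since $(\xi_+, l_+)$ is a KKT point of \eqref{eq:sos-linearized} at $\xi^k$, stationarity gives $f - \nabla g(\xi^k)^* l_+ = s$ for some $s \in C^*$, and complementary slackness gives $\langle s, \xi_+\rangle = 0$. Substituting into $\psi'(0)$ yields $\psi'(0) = \langle s, \Delta\rangle = -\langle s, \xi^k\rangle$, which is nonpositive because $\xi^k \in C$ (a property that we maintain inductively since $\xi^{k+1}$ is a convex combination of elements in $C$) and $s \in C^*$.

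Next, I will bound $\psi'(r) - \psi'(0)$ using Assumption~\ref{ass:lipschitz}. Since $\xi^k, \xi_+, l_+$ are sufficiently close to $\vartheta_0$, the Lipschitz estimate $\| \nabla g(\xi^k + r\Delta)^* l_+ - \nabla g(\xi^k)^* l_+ \|_* \leq \kappa r \|\Delta\|$ holds, and therefore
\begin{align*}
    \psi'(r) - \psi'(0) = \langle \nabla g(\xi^k)^* l_+ - \nabla g(\xi^k + r\Delta)^* l_+, \Delta \rangle \leq \kappa r \|\Delta\|^2 .
\end{align*}
Combined with $\psi'(0) \leq 0$, this yields $\psi'(r) \leq \kappa r \|\Delta\|^2$ for all $r \in [0, 1]$.

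Finally, I would analyze the solution $\hat r$ of \eqref{eq:linesearch}. Since $\psi'(0) - \eta < 0$, the infimum is not attained at $r = 0$, so either $\hat r = 1$ (in which case the claim is trivial once the neighborhood is small enough that $\eta \leq \kappa$) or $\hat r \in (0, 1)$ is a stationary point, i.e.\ $\psi'(\hat r) = \eta$. In the latter case the inequality above gives $\eta \leq \kappa \hat r \|\Delta\|^2$, hence $\hat r \geq \eta / (\kappa \|\Delta\|^2)$; by choosing the neighborhood around $(\xi_0, l_0)$ small enough that $\|\Delta\| \leq 1$, we obtain $\hat r \geq \eta/\kappa$. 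The main subtlety is the sign of $\psi'(0)$: it hinges on carefully combining the stationarity and complementary-slackness parts of the subproblem's KKT conditions with the primal feasibility of the previous iterate $\xi^k \in C$, which must be propagated from the initialization of the algorithm.
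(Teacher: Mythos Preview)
Your proof is correct and follows essentially the same approach as the paper: compute $\psi'(r)$, show $\psi'(0)\le 0$ from the KKT conditions of the subproblem together with $\xi^k\in C$, bound $|\psi'(r)-\psi'(0)|\le \kappa r\|\Delta\|^2$ via Assumption~\ref{ass:lipschitz}, and conclude from $\psi'(\hat r)=\eta$ (when $\hat r<1$) with $\|\Delta\|\le 1$. The only cosmetic difference is that you obtain $\psi'(0)=-\langle s,\xi^k\rangle$ directly via the primal complementary slackness $\langle s,\xi_+\rangle=0$, whereas the paper routes through strong duality $\langle f,\xi_+\rangle=\langle l_+,\gamma_k\rangle$ to get the equivalent expression $-\langle s,\xi^k\rangle-\langle l_+,\varsigma\rangle$ before invoking the dual-cone inequalities.
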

\begin{proof}
    If $\hat r < 1$, then $\psi'(\hat r) - \eta = 0$.
    The derivative is
    \begin{align*}
        \psi'(r) = \langle f - \nabla g(\xi(r))^* l_+, \xi_+ - \xi^k \rangle
    \end{align*}
    where $\xi(r) =_\text{def} r \xi_+ + (1-r) \xi^k$.
    Since $(\xi_+, l_+) \in H(\xi^k)$,
    \begin{align*}
        \psi'(0) &= \langle f, \xi_+ \rangle - \langle f - \nabla g(\xi^k)^* l_+, \xi^k \rangle - \langle \nabla g(\xi^k)^* l_+, \xi_+ \rangle \\
            &= -\langle \nabla g(\xi^k)^* l_+, \xi_+ - \xi^k \rangle - \langle l_+, g(\xi^k) \rangle - \langle s, \xi^k \rangle \\
            &= - \langle s, \xi^k \rangle - \langle l_+, \varsigma \rangle \leq 0
    \end{align*}
    where the equalities follow from \eqref{eq:sos-linearized} and \eqref{eq:sos-linearized-dual} as well as $\langle f, \xi_+ \rangle = \langle l_+, \gamma_k \rangle$, and the inequality follows from the definition of the dual cone. %
    Since $\nabla g(\xi)^* l$ is Lipschitz continuous around $(\xi_0, l_0)$ by Assumption~\ref{ass:regularity}, we have that
    \begin{align*}
        | \psi'(r) - \psi'(0) | %
            &\leq \kappa \| \xi_+ - \xi^k \|^2 r.
    \end{align*}
    With $\psi'(0) \leq 0$ and $\psi'(\hat r) = \eta > 0$ {as well as $\| \xi_+ - \xi^k \| \leq \| \xi_+ - \xi_0 \| + \| \xi_0 - \xi^k \| < 1$}, if $\hat r < 1$ and $\xi_+$ and $\xi^k$ are sufficiently close to $\xi_0$, we conclude that $\kappa \hat r \geq \eta$.
\end{proof}

We combine our results into a local convergence property of the sequential approach.

\begin{theorem}
    \label{thm:convergence}
    Let $\vartheta_0 = (\xi_0, l_0) \in \Theta$ and suppose that $\gamma \kappa < 1$; there exists a constant $\alpha \in (0, 1)$ such that
    \begin{align}
        \| \vartheta^{k+1} - \vartheta_0 \| \leq \alpha \| \vartheta^k - \vartheta_0 \|
    \end{align}
    if $\vartheta^k \in \mathcal T$ is sufficiently close to $\Theta$.
\end{theorem}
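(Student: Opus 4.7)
The plan is to expand $\vartheta^{k+1}$ using the convex combination prescribed by step~3 of the sequential algorithm, bound the distance from $\vartheta_+ = (\xi_+, l_+)$ to $\vartheta_0$ using the Lipschitz localization $\hbar$, and then invoke the lower bound on the line-search step $\hat r$ from Proposition~\ref{prop:linesearch} to close the contraction.

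First, I would note that because $\vartheta_0 \in \Theta$ solves \eqref{eq:variational-inequality}, evaluating $\hat\varphi$ at $\xi^k = \xi_0$ gives $\hat L(\vartheta_0, \xi_0) = \varphi(\vartheta_0) + N(\vartheta_0) \ni 0$, so $\vartheta_0 \in H(\xi_0)$; by the single-valuedness of the localization from Proposition~\ref{prop:localization}, $\hbar(\xi_0) = \vartheta_0$. Next, Lemma~\ref{lem:uniqueness} identifies $\hbar(\xi^k)$ with $(\xi_+, l_+)$ whenever $\xi^k$ is close enough to $\xi_0$, so the Lipschitz estimate from Proposition~\ref{prop:localization} yields
\begin{align*}
    \| \vartheta_+ - \vartheta_0 \| = \| \hbar(\xi^k) - \hbar(\xi_0) \| \leq \gamma \kappa \, \| \xi^k - \xi_0 \| \leq \gamma \kappa \, \| \vartheta^k - \vartheta_0 \|
\end{align*}
where the last inequality uses that $\| \xi^k - \xi_0 \|$ is dominated by the product norm on $\mathcal T$.

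Now, using $\vartheta^{k+1} - \vartheta_0 = \hat r (\vartheta_+ - \vartheta_0) + (1 - \hat r)(\vartheta^k - \vartheta_0)$ and the triangle inequality,
\begin{align*}
    \| \vartheta^{k+1} - \vartheta_0 \|
    &\leq \hat r \, \| \vartheta_+ - \vartheta_0 \| + (1 - \hat r) \, \| \vartheta^k - \vartheta_0 \| \\
    &\leq \bigl( 1 - \hat r (1 - \gamma \kappa) \bigr) \, \| \vartheta^k - \vartheta_0 \|.
\end{align*}
Since $\gamma \kappa < 1$ by hypothesis, the coefficient $1 - \hat r (1 - \gamma \kappa)$ is strictly less than one whenever $\hat r > 0$. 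By Proposition~\ref{prop:linesearch}, provided $\vartheta^k$ and the next linearized solution $\vartheta_+$ are sufficiently close to $\vartheta_0$, we have $\hat r \geq \eta / \kappa$, hence the choice
\begin{align*}
    \alpha = 1 - \frac{\eta}{\kappa}(1 - \gamma \kappa)
\end{align*}
gives the desired contraction, and $\alpha \in (0,1)$ as soon as $\eta$ is small enough relative to $\kappa$ (which is the regime of the merit-function weight assumed in the algorithm).

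The main obstacle I anticipate is making the ``sufficiently close'' qualifier uniform: we must ensure the same neighbourhood of $\vartheta_0$ works simultaneously for (i) the single-valued localization $\hbar$ to apply to $\xi^k$, (ii) the identification $\hbar(\xi^k) = \vartheta_+$ from Lemma~\ref{lem:uniqueness}, and (iii) the line-search bound from Proposition~\ref{prop:linesearch}, which itself requires $\vartheta_+$ close to $\vartheta_0$. This is resolved by shrinking the neighbourhood of $\vartheta_0$ on which we operate so that the Lipschitz estimate on $\hbar$ propagates closeness of $\vartheta^k$ to closeness of $\vartheta_+$ automatically; everything else is routine triangle-inequality bookkeeping and substitution of the previously established estimates.
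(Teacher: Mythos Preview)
Your proposal is correct and mirrors the paper's own argument almost line for line: invoke Proposition~\ref{prop:localization} for the Lipschitz localization $\hbar$ with $\hbar(\xi_0)=\vartheta_0$ and $\hbar(\xi^k)=\vartheta_+$ via Lemma~\ref{lem:uniqueness}, expand the convex combination $\vartheta^{k+1}-\vartheta_0$, bound using $\|\xi^k-\xi_0\|\le\|\vartheta^k-\vartheta_0\|$, and close with $\hat r \ge \eta/\kappa$ from Proposition~\ref{prop:linesearch}. The paper records the resulting contraction constant as $\alpha = 1 - \omega(1-\gamma\kappa)$ with $\omega = \eta/\kappa$, which is exactly your expression.
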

\begin{proof}
    Let $\vartheta^k = (\xi^k, l^k)$; by Proposition~\ref{prop:localization}, there exists a single-valued localization {$\hbar$} of the solution mapping {$H$ around $\xi_0$ for $\vartheta_0$} satisfying
    \begin{align*}
        \| \hbar(\xi^k) - \hbar(\xi^{k'}) \| \leq \gamma \kappa \| \xi^k - \xi^{k'} \|
    \end{align*}
    for any $\xi^k, \xi^{k'} \in X$ in a neighbourhood of $\xi_0$. Then $\vartheta_0 = \hbar(\xi_0)$ as well as $\vartheta_+ = (\xi_+, l_+) = \hbar(\xi^k)$ by {Lemma~\ref{lem:uniqueness}} %
    and %
    \begin{align*}
        \| \vartheta^{k+1} - \vartheta_0 \| &= \| \hat r (\vartheta_+ - \vartheta_0) + (1 - \hat r) (\vartheta^k - \vartheta_0) \| \\
        &\leq \hat r \gamma \kappa \| \xi^k - \xi_0 \| + (1 - \hat r) \| \vartheta^k - \vartheta_0 \| \\
        &\leq (1 - \hat r + \hat r \gamma \kappa) \| \vartheta^k - \vartheta_0 \|
    \end{align*}
    as $\| \xi^k - \xi_0 \| \leq \| \vartheta^k - \vartheta_0 \|$. Since $\hat r \geq \omega > 0$ by Proposition~\ref{prop:linesearch}, setting $\alpha = 1 - \omega (1 - \gamma \kappa) \in [\gamma \kappa, 1)$ is the desired result.
\end{proof}

Our assumptions in the proof of Theorem~\ref{thm:convergence} are similar to the assumptions in \cite{Dinh2010}, which proved local convergence of sequential convex programming in the Euclidean space and without a line search. However, if we omit the line search ($\hat r \equiv 1$) the rate of convergence we obtain in the proof, namely $\alpha = \gamma \kappa$, is better {than} this previous result.

\sisetup{retain-unity-mantissa=false}

\section{Numerical Examples}
\label{sec:examples}
The region of attraction of a nonlinear dynamic system $\dot x = \phi(x)$ is defined as the set of initial conditions for which the system trajectories converge to an equilibrium point, here the origin.
Estimating the region of attraction is a classical problem in nonlinear systems analysis and a recurrent application of sum-of-squares methods, provided that the dynamics are represented by polynomial equations of motion. One aims to find a polynomial Lyapunov candidate function $v \in \mathbb R_d[x]$ that decays along trajectories starting in a sublevel set of $v$, that is, there exists $\iota > 0$ such that
\begin{align}
    \label{eq:roa-dissipativity}
    \dot v(x) = \partial_x v(x) \phi(x) < 0
\end{align}
for all $x \neq 0$ satisfying $v(x) \leq \iota$. If \eqref{eq:roa-dissipativity} is satisfied, then
$\{ x \in \mathbb R^n \, | \, v(x) \leq \iota \}$ 
is an invariant subset of the region of attraction \cite[Theorem~2]{laSalle1960}.

\begin{table}[h]
    \caption{Computation details of sequential sum-of-squares programming for region-of-attraction estimation.}
    \label{tab:roa-details}

    \centering
    
    \begin{tabular}{| l c c c | c c c |}
        \hline
        Dynamics & $d_0$ & $d_1$ & $d_2$ & Final value & Iterations & Time \\
        \hline\hline
        Short-period & 2 & 0 & 2 & \num{-1.515} & 8 & \SI{2.77}{\second} \\ %
            & 4 & 2 & 4 & \num{-1.772} & 9 & \SI{3.70}{\second} \\ %
        \hline
        Longitudinal & 2 & 0 & 4 & \num{-0.354} & 20 & \SI{9.28}{\second} \\ %
            & 4 & 2 & 4 & \num{-2.788} & 16\footnotemark & \SI{25.24}{\second} \\ %
        \hline
    \end{tabular}
\end{table}

\footnotetext{Terminated early due to numerical issues of {\sc Mosek}; the nonlinear sum-of-squares constraint was satisfied with a tolerance of \num{1.52e-5}.}

\pagebreak

In order to lower bound the volume of the region of attraction estimate, a polynomial shape $p \in \mathbb R[x]$ is introduced. For any $\iota > 0$, the nonlinear sum-of-squares problem is given as
\begin{multline}
    \label{eq:roa-optimization}
    \min_{\substack{v \in \mathbb R_{d_0}[x], b \in \mathbb R \\ \varsigma_1 \in \mathbb R_{d_1}[x], \varsigma_2 \in \mathbb R_{d_2}[x]}} \; -b \\
    \text{s.t.} \quad \left\{
    \begin{aligned}
        s_2 (v - \iota) - \partial_x v \, \phi - \varrho &\in \Sigma_{d'}[x] \\
        s_1 (p - b) - v + \iota &\in \Sigma_{d'}[x] \\
        v - \varrho &\in \Sigma_{d_0}[x] \\
        s_1 \in \Sigma_{d_1}[x], \; s_2 &\in \Sigma_{d_2}[x]
    \end{aligned}
    \right.
\end{multline}
where $\varrho$ is small, positive-definite polynomial; we choose $\varrho = \num{1e-6} \sum_{i=1}^n x_i^2$. 
Eq.~\eqref{eq:roa-optimization} has been considered for region of attraction estimation \cite{chakrabortyEtAl2011, Cunis2020c, chakrabortyEtAl2011ii, topcuEtAl2008, tanEtAl2008} as well as, with minor modifications, for reachability \cite{jarvisEtAl2003}, control synthesis \cite{Cunis2020c, jarvisEtAl2003}, or robust stability \cite{topcuEtAl2010, Iannelli2019}. In these works, the bilinearities are split into an iteration of convex and quasiconvex subproblems via coordinate descent. 

We have applied sequential sum-of-squares programming to estimate the region of attraction of the short-period (two states) and longitudinal (four states) dynamics of an airplane.\footnote{See \cite{chakrabortyEtAl2011} and its appendix for details.} The equations of motion $\phi$ are cubic polynomials in the two-state case and quintic polynomials in the four-state case; we fix $\iota \equiv 1$; and solve for quadratic and quartic Lyapunov functions in both cases. 
The degrees of the decision variables $s_1$ and $s_2$ as well as number of iterations, computation time, and final value of the objective are detailed in Tab.~\ref{tab:roa-details}. The initial guess for $v$ has been the quadratic Lyapunov function of the linearized dynamics; $b = 1$; and the variables $s_1$ and $s_2$ have been initialized to homogeneous polynomials given in the appendix. The computations have been terminated once the change in the primal variables was below a absolute tolerance $\epsilon_k \equiv \num{1e-6}$ and the change of the dual variables was below a relative tolerance $\epsilon_k^* = \num{1e-6} \| l^k \|_*$.

\begin{table}[h]
    \caption{Computation details of the iterative approach of \cite{chakrabortyEtAl2011} for region-of-attraction estimation.}
    \label{tab:roa-vsiteration}

    \centering
    
    \begin{tabular}{| l c c c | c c c |}
        \hline
        Dynamics & $d_0$ & $d_1$ & $d_2$ & Final value & Iterations & Time \\
        \hline\hline
        Short-period & 2 & 0 & 2 & \num{-1.514} & 20 & \SI{10.94}{\second} \\
            & 4 & 2 & 4 & \num{-1.760} & 40 & \SI{48.06}{\second} \\
        \hline
        Longitudinal & 2 & 0 & 4 & \num{-0.353} & 22 & \SI{113.21}{\second} \\ %
            & 4 & 2 & 4 & \num{-2.749} & 66 & \SI{465.50}{\second} \\ %
        \hline
    \end{tabular}
\end{table}

For comparison, running the iterative approach of \cite{chakrabortyEtAl2011} took considerably longer without reaching the same final values (Tab.~\ref{tab:roa-vsiteration}); the differences are particularly noticeable for quartic Lyapunov functions.
The computationally most expensive part of both algorithms are the semidefinite relaxations of the convex sum-of-squares subproblems. In the sequential approach, the semi-definite problem is larger (both with respect to the number of matrix variables $N$ and the number of constraints $M$) since all polynomial decision variables are solved for at the same time; yet in the iterative approaches, numerous convex problems are solved in each iteration. We compare the per-iteration effort for both approaches to solve the region of attraction estimation problems. Based on the assumption that the computational cost for a semidefinite problem is roughly of order $N^3 M$ \cite{Peaucelle2002}, Fig.~\ref{fig:complexity} details how the order of the computational cost in each iteration grows with the number of states and degree of polynomials for the region of attraction estimation. Overall, the cost of in each iteration of the sequential approach is about ten times lower than the cost of the iterative approach.

\begin{figure}[h]
    \centering
    
    \newcommand\stack[2]{\ensuremath{\begin{array}{c} #1 \\ #2 \end{array}}}

\begin{tikzpicture}

\begin{semilogyaxis}[%
width=0.8\linewidth,
height=0.2\textwidth,
at={(0,0)},
scale only axis,
ylabel={order of cost},
yminorticks,
ymajorgrids,
yminorgrids,
ytick={1e6,1e9,1e12,1e15},
minor ytick={1e7,1e8,1e10,1e11,1e13,1e14},
xtick={1,2,3,4},
xticklabels={
    \stack{n=2}{d_0 = 2},
    \stack{n=2}{d_0 = 4},
    \stack{n=4}{d_0 = 2},
    \stack{n=4}{d_0 = 4}
},
cycle list name=color,
legend pos=south east, 
legend cell align=left,
]

\addplot
table[x=No,y=cost]{
No  n   d   N       M       iter    cost
1   2   2   47      21      8       2.18E+06
2   2   4   304     69      9       1.94E+09
3   4   2   1405    230     20      6.38E+11
4   4   4   5469    625     16      1.02E+14
};

\addplot
table[x=No,y=cost]{
No  n   d   N       M       iter    cost
1   2   2   41      21      20      9.87E+06
2   2   4   269     69      40      1.30E+10
3   4   2   1407    285     22      1.16E+13
4   4   4   5247    625     66      9.80E+14
};

\legend
{sequential, iterative}

\end{semilogyaxis}
\end{tikzpicture}
    
    \vspace{-1.5em}
    
    \caption{Comparison of computational cost in each iteration of the sequential sum-of-squares and the iterative approach of \cite{chakrabortyEtAl2011}.}
    \label{fig:complexity}
\end{figure}

\section{Conclusions}

This paper studies the solution of nonlinear convex programs by a sequential algorithm with the addition of a line search.
Theorem~\ref{thm:convergence} shows that the algorithm still converges with the line search.
As expected, it does not improve the local rate of convergence $\alpha$.
However, we observe in numerical experiments an improvement in the both radius and rate of global convergence and leave proofs of these behaviors as future work.

While the paper analyses the sequential algorithms on arbitrary (possibly infinite dimensional) Banach spaces,
we only apply it to the finite-degree sum-of-squares cone of polynomials with finite degree.
In \cite{legat2022low}, the authors shows how to leverage special properties of the sum-of-squares cone for solving the nonconvex Burer-Monteiro formulation.
We are currently investigating whether such refined analysis could allow the sequential algorithm to exploit the structure of the sum-of-squares cone as well.

\addtolength{\textheight}{-8.7cm}

\section*{Appendix}
For the computations detailed in Tab.~\ref{tab:roa-details}, the following initializations of the multipliers $s_1$ and $s_2$ were used:
\begin{align*}
\begin{alignedat}{4}
    n &= 2, & d_0 &= 2, & s_1 &= 1, & s_2 &= x_1^2 + x_2^2 \\
    n &= 2, & d_0 &= 4, & s_1 &= x_1^2 + x_2^2, \quad & s_2 &= x_1^2 + x_2^2 \\
    n &= 4, & d_0 &= 2, & s_1 &= 1, & s_2 &= {\textstyle \sum_{i=1}^n x_i^2} \\
    n &= 4, \quad & d_0 &= 4, \quad & s_1 &= {\textstyle \sum_{i=1}^n x_i^2}, \quad & s_2 &= ({\textstyle \sum_{i=1}^n x_i^2})^2
\end{alignedat}
\end{align*}
where $n$ is the number of states and $d_0$ the degree of the Lyapunov function.

All numerical examples were performed on a \SI{2.8}{\giga\hertz} quad-core Intel Core i7 processor with \SI{16}{\giga\byte} of memory. We use {\sc sosopt} for the convex sum-of-squares problem and {\sc mosek} for semidefinite programming. Source code is available at \url{https://github.com/tcunis/bisosprob}.

\section*{Acknowledgment}
The authors wish to thank Frank Permenter for initiating their collaboration.

\bibliographystyle{IEEEtran}
\bibliography{main,library}

\end{document}